\documentclass[11pt]{article}

\usepackage{amsfonts}
\usepackage{amsmath}
\usepackage{pdfsync}
\usepackage{enumerate}

\usepackage[all]{xy}
\usepackage[english]{babel}
\usepackage[T1]{fontenc}
\usepackage{graphicx,color}
\usepackage{xcolor}
\colorlet{blu1}{blue!70!black}
\colorlet{blu2}{blue!50!black}
\colorlet{blu3}{blue!70!red}
\colorlet{blu4}{blue!60!green}
\colorlet{red1}{red!80}
\colorlet{red2}{red!50!black}
\colorlet{red3}{red!70!yellow}
\colorlet{red4}{red!50!yellow}
\colorlet{yel1}{yellow!50!black}
\colorlet{yel3}{yellow!20!blue}
\colorlet{gre1}{green!60!blue}
\colorlet{gre2}{green!60!black}
\colorlet{gre3}{green!40!black}

\newtheorem{theorem}{Theorem}
\newtheorem{definition}{Definition}
\newtheorem{lemma}{Lemma}
\newtheorem{remark}{Remark}

\newtheorem{proposition}{Proposition}
\newtheorem{example}{Example}

\newtheorem{proof}{Proof}

\numberwithin{equation}{section}

\title{Reducing Kolmogorov’s Reversibility Criterion Via a Basis for the Interaction Graph Kernel}
\author{ Cruz de la Rosa M.A. \& Guerrero-Poblete F. \\
\small Universidad Aut\'onoma Metropolitana, Iztapalapa. \\ 
\small{Av. San Rafael Atlixco, No. 186, Col. Leyes de Reforma $1^{\text{a}}$ Sección,} \\
\small {Iztapalapa, C.P. 09310, CDMX. M\'exico.}\\
\small \texttt{marko@xanum.uam.mx} \\
\small \texttt{poblete@xanum.uam.mx}
}

\date{}
\begin{document}

\maketitle

\begin{abstract}
\noindent In this work, we define the interaction graph for a continuous-time Markov chain to analyze its dynamic structure. Under certain assumptions, we demonstrate that verifying Kolmogorov's reversibility criterion reduces to checking it for a cycle basis of the incidence matrix kernel associated with the interaction graph. This result provides an efficient tool for verifying reversibility in Markov continuous chains.
\end{abstract}

\noindent \textbf{Keywords:} Continuous-time Markov chains, equilibrium distribution, Kolmogorov's reversibility criterion, interaction graph.\\
\noindent \textbf{Mathematics Subject Classification codes:} 60J27.

\section{Introduction}\label{sec:Introd}

Since their emergence in the early $20$th century, Markov chains have been used to model memoryless stochastic phenomena in different fields such as biology, physics and economics, among others. One of the most significant aspects both theoretically and practically is that of \textit{equilibrium}. An Equilibrium distribution is one that remains invariant over time.

Kolmogorov's reversibility criterion provides necessary and sufficient conditions for the existence of an equilibrium distribution. Roughly speaking, this criterion asserts that, for any cycle, the probability of traveling it in one direction is equal to the probability of traveling it in the opposite direction. In practice, verifying this condition for all cycles can be a difficult task. In this paper, we show that it suffices to check the criterion for a reduced set of cycles of minimal length. These cycles form a basis for the incidence matrix kernel of the interaction graph associated with the Markov chain, notion introduced in this work and inspired by the definition of interaction graphs for weak-coupling limit type quantum Markov semigroups studied in \cite{AccardiFQ} and \cite{Marco-Fer-Julio}.

This paper is organized as follows: after the Introduction, Section 2 presents the concept of equilibrium (detailed balance). In Section 3, we define the interaction graph associated with a Markov chain and provide a basis for the kernel of the incidence matrix in terms of cycles of minimal length. In Section 4,  we present the main result: to verify Kolmogorov's reversibility criterion, it suffices to do so for a basis of the incidence matrix kernel of the interaction graph. Finally, we illustrated these results with an example.

\section{Equilibrium and Detailed Balance}

We start by recalling the definition of equilibrium and detailed balance for a continuous time Markov chain.  

\begin{definition} Given a continuous time Markov chain $\{X_t\}_{t\geq 0}$ with state space $S$. We say that the chain is in equilibrium, if for any choice  $x_0,...,x_n \in S$ and $0\leq t_{0}<t_{1}<\cdots<t_{n}$

$$\mathbb{P}(X_{t_0}=x_0,...,X_{t_n}=x_n)=\mathbb{P}(X_{t_0}=x_n,...,X_{t_n}=x_0)$$
\end{definition}
(See \cite{Durretti}, \cite{Mu-Fa-Chen} and \cite{Stroock} for a detailed review).

\noindent Following the standard notation in Markov chain theory, we shall denoted by $q_{ij}$ the matrix elements of the infinitesimal generator $Q$ and by  $\pi_{i}$ to $i$-nt element of the invariant distribution $\pi$. Recall that a distribution $\pi$ is invariant if only if $\pi Q=0$ and it is fulfilled 
\begin{enumerate}
\item $\displaystyle q_{ij}\geq 0$,\ for all  $i\neq j$ 
\item $\displaystyle q_{ii}=-\sum_{j: j\neq i}q_{ij}$,\ for all   $i\in S$
\end{enumerate}
\noindent For more details, see \cite{Norris}.
  
\begin{definition} Given a continuous time Markov chain $\{X_t\}_{t\geq 0}$ with state space $S$, infinitesimal generator $Q$
and invariant distribution $\pi$, we say that the chain satisfies the detailed balance condition if for all $i\neq j$,
\begin{equation}
\label{balance-detallado}\pi_{i} q_{ij}=\pi_{j}q_{ji}
\end{equation}
\end{definition}

\noindent In an equivalent way, detailed balance means that $\pi_{i} q_{ij}-\pi_{j}q_{ji}=0$, for all $i\neq j$; these quantities are known as \textit{currents}. A well known fact is that the equilibrium condition is equivalent to the detailed balance condition, (see \cite{Stroock}), nevertheless the last one is easier to handle.\\

\noindent The following lemma will be useful for the next section.
\begin{lemma}
Let $\{X_t\}_{t\geq 0}$ a continuous time Markov chain  with state space $S$ and infinitesimal generator $Q$, a distribution $\pi$ is invariant if only if for all $j\in S$
\begin{eqnarray}\label{Suma-DB}
\sum_{i<j}(\pi_{i}q_{ij}-\pi_{j}q_{ji})+\sum_{i>j}(\pi_{i}q_{ij}-\pi_{j}q_{ji})=0
\end{eqnarray}
\end{lemma}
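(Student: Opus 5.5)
The plan is to expand the $j$-th component of $\pi Q$ and compare it term by term with the left-hand side of (\ref{Suma-DB}). By definition $\pi$ is invariant iff $(\pi Q)_j=\sum_{i\in S}\pi_i q_{ij}=0$ for every $j\in S$. So it suffices to show that, for each fixed $j$, $(\pi Q)_j$ equals the expression in (\ref{Suma-DB}). The two sums there, over $i<j$ and over $i>j$, together run over all $i\neq j$. (The split into $i<j$ and $i>j$ only fixes an orientation of the pairs, which is how the expression will be used with the incidence matrix later.) Hence the left-hand side of (\ref{Suma-DB}) is simply
\[
\sum_{i\neq j}\left(\pi_i q_{ij}-\pi_j q_{ji}\right)=\sum_{i\neq j}\pi_i q_{ij}-\pi_j\sum_{i\neq j} q_{ji}.
\]

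Next I use property 2 of the generator, $q_{jj}=-\sum_{i\neq j}q_{ji}$. Substituting it into the second sum turns the right-hand side into $\sum_{i\neq j}\pi_i q_{ij}+\pi_j q_{jj}=\sum_{i\in S}\pi_i q_{ij}=(\pi Q)_j$. Therefore the expression in (\ref{Suma-DB}) vanishes for all $j$ iff $\pi Q=0$, i.e.\ iff $\pi$ is invariant. Both directions of the equivalence follow at once from this identity.

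The only point needing care is justifying the rearrangement of sums when $S$ is countably infinite. I would assume, as is implicit in the paper's setting (and automatic for finite $S$), that the chain is conservative with $\sum_{i\neq j}q_{ji}=-q_{jj}<\infty$. Then $\pi_j\sum_{i\neq j}q_{ji}$ is finite. For $\sum_{i\neq j}\pi_i q_{ij}$, the terms are nonnegative, so splitting $\sum_{i\neq j}(\pi_iq_{ij}-\pi_jq_{ji})$ into the difference of two series is legitimate as soon as the second series is finite. This is the main (and only mild) obstacle; for a finite state space it disappears entirely.
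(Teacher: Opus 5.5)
Your proposal is correct and is essentially the paper's proof: you expand $(\pi Q)_j$, substitute $q_{jj}=-\sum_{i\neq j}q_{ji}$, and regroup the sum over $i\neq j$ into the parts $i<j$ and $i>j$. Your extra justification of the rearrangement for a countably infinite conservative chain is sound; the paper omits it and restricts to finite $S$ right afterwards.
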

\begin{proof}
Since $\pi$ is invariant if only if $\pi Q=0$, in the $j$-th entry we have
\begin{eqnarray}\nonumber 
(\pi Q)_{j}&=&\sum_{i}\pi_{i}q_{ij}=\sum_{i: i\neq j}\pi_{i}q_{ij}+\pi_{j}q_{jj}=\sum_{i: i\neq j}\pi_{i}q_{ij}-\pi_{j}\sum_{i: i\neq j} q_{ji}\\
\nonumber &=&\sum_{i: i\neq j}(\pi_{i}q_{ij}-\pi_{j}q_{ji})=\sum_{i<j}(\pi_{i}q_{ij}-\pi_{j}q_{ji})+\sum_{i>j}(\pi_{i}q_{ij}-\pi_{j}q_{ji})
\end{eqnarray} 
\end{proof}

\section{Interaction graph of a Markov chain}

\noindent In this section we shall define the interaction graph of a Markov chain. From now on, we shall suppose that the state space $S$ is finite and for all $i\neq j$  we have $q_{ij}>0$, in this way, equation ($\ref{Suma-DB}$) takes the form:
\begin{eqnarray}\label{Currents-finite}
\sum_{i=1}^{j-1}(\pi_{i}q_{ij}-\pi_{j}q_{ji})+\sum_{i=j+1}^{N}(\pi_{i}q_{ij}-\pi_{j}q_{ji})=0.
\end{eqnarray}
\noindent 
For $i\neq j$, we define $J_{i,j}:=(\pi_{i}q_{ij}-\pi_{j}q_{ji})$. It is clear that $-J_{j,i}=J_{i,j}$. Hence, condition $(\ref{Currents-finite})$ can be written as 
\begin{eqnarray} \label{Currents-Jij}
\sum_{i=1}^{j-1}J_{ij}-\sum_{i=j+1}^{N}J_{ji}=0,\qquad \forall j\in S.
\end{eqnarray} 

\noindent   The following definition is inspired by \cite{AccardiFQ} and \cite{Marco-Fer-Julio}, where the interaction graph is defined for quantum Markov semigroups of weak coupling limit type.  

\begin{definition}
Let $\{X_t\}_{t\geq 0}$ a Markov chain with state space $S=\{1,2,\dots, N\}$ and infinitesimal generator  $Q$. The interaction graph associated with the Markov chain is the graph $G(V,E)$, where the set of vertices is $V=S$  and the set of edges is $E=\{(i,j)\in S\times S: i<j,\ q_{ij}>0\}$. 
\end{definition}

\noindent In this work we establish that for any $(i,j)\in E$, the edge comes out from $i$ and comes in to $j$,  so, the interaction graph is in fact a directed graph (digraph). Given a digraph $G(V,E)$, its incidence matrix has dimension ${|V|\times |E|}$, that is, its rows are indexed by vertices and its columns by edges. The elements of the incidence matrix are such that in the $j$-th column and $i$-th row have $-1$, if the $j$-th edge comes out from vertex $i$, $1$ if the $j$-th edge comes in to vertex $i$ and zero in other case.

\noindent 
From now on, we shall denote by $\Gamma$ to the incidence matrix of the interaction graph $G(V,E)$. Notice that if the graph is complete, the set of edges has cardinality  $\binom{N}{2}$.  
For the pair  $(i,j)$, $(i',j')$ the lexicographic order establishes that $(i,j)<(i',j')$ if $i<j$, or, when $i=j$ one has that $j<j'$.
The set of edges $E$ shall be equipped with the lexicographic order, for such order the function $\theta_N:E\to \{1,\dots,\binom{N}{2}\}$ given by 
\begin{eqnarray}
\theta_{N}(i, j)=S_{N}(i)+j-i, \quad \text{where}\  S_{N}(i):=\sum_{t=1}^{i-1}(N-t)=(i-1)\big(N-\frac{i}{2}\big)\nonumber\\
\end{eqnarray}
is increasing and bijective  (see \cite{Marco-Fer-Julio} for more details), we shall also use indistinctly $\theta$ or $\theta_{N}$. We define the \textit{currents vector } as the vector $J\in \mathbb{R}^{|E|}$, whose entries are precisely the currents $J_{i,j}$ with the lexicographic order, i.e., 
$J_{i,j} \rightarrow J_{\theta(i,j)}$, namely
\begin{eqnarray}
J:=\left(\begin{array}{c}
J_{1,2}\\
\vdots\\
J_{1,N}\\
J_{2,3}\\
\vdots\\
J_{N-1, N}
\end{array}\right)\rightarrow \left(\begin{array}{c}
J_{1}\\
\vdots\\
J_{N-1}\\
J_{N}\\
\vdots\\
J_{N\choose 2}
\end{array}\right)
\end{eqnarray} 

\noindent The following theorem shows that condition (\ref{Currents-Jij}) is equivalent to the currents vector $J$ being an element in $ker(\Gamma)$. 

\begin{theorem}
Given  $\{X_t\}_{t\geq 0}$ a Markov chain with state space $S=\{1,2,\dots, 
N\}$ and infinitesimal generator $Q$, the following is equivalent 
\begin{enumerate}
\item $\Gamma J=0$.
\item $\displaystyle \sum_{i=1}^{j-1}J_{ij}-\sum_{i=j+1}^{N}J_{ji}=0$, for all $j\in S$.
\end{enumerate}
\end{theorem}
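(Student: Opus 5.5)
The plan is to compute the $j$-th entry of $\Gamma J$ directly from the definition of the incidence matrix and check that it equals the left-hand side of condition 2. Since $\Gamma J=0$ means exactly that every entry vanishes, the equivalence then follows row by row for each $j\in S$.

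Fix a vertex $j\in S$ and write
\[
(\Gamma J)_j=\sum_{e\in E}\Gamma_{j,\theta(e)}\,J_{\theta(e)},
\]
where the sum runs over the columns indexed by edges in lexicographic order through the bijection $\theta_N$. Because all $q_{ij}>0$ for $i\neq j$, the interaction graph is complete and $E=\{(i,k):1\le i<k\le N\}$. By the orientation convention, the edge $(i,k)$ comes out of $i$ and comes into $k$. Hence the column $\theta(i,k)$ has entry $-1$ in row $i$, entry $1$ in row $k$, and $0$ elsewhere.

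The edges with nonzero entry in row $j$ are therefore exactly the following.
\begin{enumerate}
\item The edges $(i,j)$ with $i<j$. These come into $j$ and contribute $+J_{\theta(i,j)}=J_{ij}$.
\item The edges $(j,i)$ with $i>j$. These come out of $j$ and contribute $-J_{\theta(j,i)}=-J_{ji}$.
\end{enumerate}
Summing these contributions gives
\[
(\Gamma J)_j=\sum_{i=1}^{j-1}J_{ij}-\sum_{i=j+1}^{N}J_{ji}.
\]
One should note the boundary cases: for $j=1$ the first sum is empty, and for $j=N$ the second sum is empty.

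Consequently $\Gamma J=0$ if and only if this expression vanishes for every $j$, which is condition 2. By the earlier lemma, this is in turn equivalent to $\pi Q=0$.

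The only delicate point is bookkeeping. One must check that the indexing of $J$ via $\theta_N$ matches the column indexing of $\Gamma$, so that the column $\theta(i,k)$ pairs with the entry $J_{i,k}$, and that the sign convention (out $=-1$, in $=+1$) produces the stated signs rather than their negatives. Even a global sign flip would not affect the equivalence, since the condition is $=0$.
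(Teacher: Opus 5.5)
Your proposal is correct and takes essentially the same route as the paper: compute $(\Gamma J)_j$ entrywise by splitting the columns into edges $(i,j)$, $i<j$, entering $j$ (coefficient $+1$) and edges $(j,i)$, $i>j$, leaving $j$ (coefficient $-1$). Your signs are the correct ones, whereas the paper's intermediate expression $\sum_{i>j}J_{\theta(j,i)}-\sum_{i<j}J_{\theta(i,j)}$ is the negative of $(\Gamma J)_j$; as you observe, this global sign does not matter, since the condition is that the entry vanishes.
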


\begin{proof}
For $(i', j')\in E$ such that  $\theta(i',j')=k$, one has that $\Gamma$ has entries   
\begin{equation}
\Gamma_{j,k}=\left\{
              \begin{array}{rccl}
                -1 & &\text{if}\ i'=j,& \text{``since the edge comes out from $i'$''} \\
                1 &&\text{if}\ j'=j, & \text{``since the edge comes in to $j'$''} \\
                0 && \text{other case.}&
              \end{array}
            \right.
\end{equation}

\noindent On the other hand, $\Gamma J=0$ if only if  $(\Gamma J)_{j}=0$ for all $j$, that is,
\begin{eqnarray}\nonumber 
0&=& \sum_{k} \Gamma_{j,k} J_{k}=\sum_{(i',j'):i'<j'}\Gamma_{j,\theta(i',j')}J_{\theta(i',j')} \\ 
&=& \sum_{j':j'>j}\Gamma_{j,\theta(j,j')}J_{\theta(j,j')}+ \sum_{i':i'<j}\Gamma_{j,\theta(i',j)}J_{\theta(i',j)} \\ \nonumber
\end{eqnarray} 
by renaming variables, the above is satisfied if only if 

\begin{eqnarray}
0=\sum_{i:i>j}\Gamma_{j,\theta(j,i)}J_{\theta(j,i)}+\sum_{i:i<j}\Gamma_{j,\theta(i,j)}J_{\theta(i,j)}=\sum_{i:i>j}J_{\theta(j,i)}-\sum_{i:i<j}J_{\theta(i,j)}
\end{eqnarray}

\noindent if and only if
\begin{eqnarray}
\sum_{i:i<j}J_{ij}-\sum_{i:i>j}J_{ji}=0.
\end{eqnarray}
\end{proof}

\noindent The following theorem gives us an easy way to compute the dimension of $ker(\Gamma)$. For a proof see \cite{Busacker}.

\begin{theorem}\label{dim}
The rank of the incidence matrix associated with a graph with $N$ vertices is $N-p$, where $p$ is the number of connected components.
\end{theorem}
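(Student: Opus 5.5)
The plan is to prove the rank formula component by component. First, reduce to the connected case. Order the vertices so that vertices in the same connected component are consecutive, and order the edges the same way. These reorderings only permute rows and columns of $\Gamma$, so they do not change the rank. Since every edge joins two vertices of the same component, the permuted $\Gamma$ is block diagonal, with one block $\Gamma_c$ for each component $c$. Each $\Gamma_c$ is the incidence matrix of that component, which is a connected graph on $N_c$ vertices. Hence $\operatorname{rank}\Gamma=\sum_c \operatorname{rank}\Gamma_c$. If we show $\operatorname{rank}\Gamma_c=N_c-1$, summing over the $p$ components gives $N-p$.

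For a connected component, work with the rows, that is, with the left kernel $\{y\in\mathbb{R}^{N_c}: y^{T}\Gamma_c=0\}$. Take an edge $e=(i,j)$; its column has $-1$ in row $i$ and $1$ in row $j$. So the $e$-th entry of $y^{T}\Gamma_c$ is $y_j-y_i$. The condition $y^{T}\Gamma_c=0$ therefore says $y_i=y_j$ for every edge. Because the component is connected, any two vertices are joined by a path, and propagating the equality along the path shows $y$ is constant. Conversely, the constant vector $\mathbf{1}$ is in the left kernel, since each column of $\Gamma_c$ has exactly one $-1$ and one $1$ and so sums to zero. Thus the left kernel is exactly $\operatorname{span}\{\mathbf{1}\}$, which has dimension $1$. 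Rank–nullity applied to $\Gamma_c^{T}$ then gives $\operatorname{rank}\Gamma_c=N_c-1$.

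One degenerate case needs a remark. An isolated vertex has no edges, so its block is an empty $1\times 0$ matrix. Its rank is $0=1-1$, which is consistent with the formula, but it should be noted explicitly. The only step needing real care is the path-propagation argument, since that is where connectedness enters. It is routine, because the direction of an edge does not matter: the condition $y_i=y_j$ is symmetric in $i$ and $j$.

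As a consequence, $\dim\ker\Gamma=|E|-N+p$. This is the quantity we will use later, to count the cycles needed in a basis of the kernel.
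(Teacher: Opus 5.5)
Your proof is correct and complete. The paper gives no argument of its own here; it only refers to Busacker--Saaty. So there is no in-paper proof to match, and yours works as a self-contained replacement.

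Your route is to permute $\Gamma$ into block-diagonal form, show that the left kernel of a connected component's incidence matrix is $\operatorname{span}\{\mathbf{1}\}$, and apply rank--nullity to $\Gamma_c^{T}$. This is probably the shortest standard proof.

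The other common textbook route splits the argument into two bounds. The upper bound $\operatorname{rank}\Gamma_c\le N_c-1$ comes from the zero column sums. The lower bound comes from exhibiting $N_c-1$ independent columns, namely those of a spanning tree. Equivalently, one deletes a row and peels off leaves to get an $(N_c-1)\times(N_c-1)$ minor with determinant $\pm1$. That version costs a little more but yields more. It shows that the columns of any spanning forest form a basis of the column space. It also shows that each non-tree edge gives a fundamental-cycle vector in $\ker\Gamma$, which produces an explicit cycle basis of dimension $|E|-N+p$. That is the kind of object the paper uses afterwards.

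One remark on scope. Your argument relies on the orientation signs, through the zero column sums and the entry $y_j-y_i$. This is necessary: for the unsigned incidence matrix over $\mathbb{R}$ the formula fails, since a triangle gives rank $3$ rather than $2$. Similarly, your observation that connectivity is meant in the undirected sense is exactly how ``connected components'' must be read for the interaction digraph.
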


\begin{remark}
If the graph is complete, by Theorem $\ref{dim}$, the rank of the matrix $\Gamma$ is $N-1$, so the kernel has dimension $\binom{N}{2}-(N-1)=\binom{N-1}{2}$.
\end{remark}

\noindent For an arbitrary graph, the minimal length of the cycles is three. We shall denote as $C_{(i, i+1,i+1+j)}$ to the minimal length cycle that connects the vertices $i$, $i+1$ and $i+1+j$. Given the set of the triplets $E':=\{(i,i+1,i+1+j)\,:\, 1\le i\le N-2,\, 1\le j\le N-(i+1)\}$; if $E'$ is  equipped with the lexicographic order, then the function ${\theta'_N:E'\to \{1,\dots,\binom{N-1}{2}\}}$, given by
\begin{eqnarray}\label{Orden-triple}
\theta'_N(i,i+1,i+1+j)=\theta_N(i+1,i+1+j)-(N-1)
\end{eqnarray}
is increasing and bijective (see \cite{Marco-Fer-Julio} for more details).

\noindent The following theorem states that the set of minimal length cycles conforms a basis for $ker(\Gamma)$ and gives an explicit formula for the vector coordinates of the cycles.

\begin{theorem}
For $1\leq i\leq N-2$ and $1\leq j\leq N-(i+1)$, let the mi\-ni\-mal length cycles $C_{(i,i+1,i+1+j)}$ given by the following formula: for $k\in \{1,2,\dots, \binom{N}{2}\}$,
\begin{equation}
C_{(i, i+1,i+1+j)}(k)=\left\{
              \begin{array}{rcccl}
                1 & &\text{if} && k=(i-1)(N-\frac{i}{2})+1, \\
                  & &\text{or} && k=i(N-\frac{i+1}{2})+j, \\
                -1 &&\text{if} &&k=(i-1)(N-\frac{i}{2})+j+1, \\
                0 &&  &&\text{other case.}
              \end{array}
            \right.\label{tray cerradas}
\end{equation}
then the set $\mathcal{C}:=\left\{C_{(i,i+1,i+1+j)}:\, 1\leq i\leq N-2,\, 1\leq j\leq N-(i+1) \right \}$ is a basis for $Ker(\Gamma)$.
\end{theorem}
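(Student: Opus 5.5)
Under the standing hypothesis that the interaction graph is complete, Theorem \ref{dim} and the Remark give $\dim Ker(\Gamma)=\binom{N-1}{2}$, and $|\mathcal{C}|=\binom{N-1}{2}$ because $\theta'_N$ is a bijection onto $\{1,\dots,\binom{N-1}{2}\}$. So it suffices to prove that every $C_{(i,i+1,i+1+j)}$ lies in $Ker(\Gamma)$ and that the family $\mathcal{C}$ is linearly independent.

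\textbf{Step 1: decode the coordinates.} Using $\theta_N(a,b)=S_N(a)+b-a$ with $S_N(i)=(i-1)(N-\frac{i}{2})$, the three nonzero positions are as follows:
\begin{itemize}
\item[] $k=S_N(i)+1=\theta(i,i+1)$;
\item[] $k=S_N(i)+j+1=\theta(i,i+1+j)$;
\item[] $k=i(N-\frac{i+1}{2})+j=S_N(i+1)+j=\theta(i+1,i+1+j)$.
\end{itemize}
Hence $C_{(i,i+1,i+1+j)}=e_{\theta(i,i+1)}+e_{\theta(i+1,i+1+j)}-e_{\theta(i,i+1+j)}$. This is the signed indicator of the triangle $i\to i+1\to i+1+j\to i$, where the edge $(i,i+1+j)$ is traversed against its orientation.

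\textbf{Step 2: membership in the kernel.} For an edge $(a,b)$, the column $\Gamma e_{\theta(a,b)}$ equals $-f_a+f_b$, where $f_v$ are the standard basis vectors of $\mathbb{R}^N$. Therefore
\[
\Gamma C_{(i,i+1,i+1+j)}=(-f_i+f_{i+1})+(-f_{i+1}+f_{i+1+j})-(-f_i+f_{i+1+j})=0 .
\]

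\textbf{Step 3: linear independence.} This is the only nonroutine step, and I would handle it with a triangularity argument. The vector $C_{(i,i+1,i+1+j)}$ is the only element of $\mathcal{C}$ with a nonzero entry at the coordinate $\theta(i+1,i+1+j)$.

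To see this, note that every other element $C_{(i',i'+1,i'+1+j')}$ has support in $\{\theta(i',i'+1),\theta(i',i'+1+j'),\theta(i'+1,i'+1+j')\}$. The edge $(i+1,i+1+j)$ could coincide with one of these in only three ways.
\begin{itemize}
\item[] It equals $(i'+1,i'+1+j')$. Then $(i',j')=(i,j)$, so the element is $C_{(i,i+1,i+1+j)}$ itself.
\item[] It equals $(i',i'+1)$. Then $i'=i+1$ and $j=1$.
\item[] It equals $(i',i'+1+j')$. Then $i'=i+1$ and $j=j'+1$.
\end{itemize}
So the claim as stated is false: the edges in the second and third cases do get hit by the cycles $C_{(i+1,\cdot,\cdot)}$. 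I would therefore replace it by an ordering argument.

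Order the vectors by $\theta'$ and use the "pivot" coordinate $p(i,j)=\theta(i+1,i+1+j)$. The pivot $p(i,j)$ has first index $i+1$. Any other vector $C_{(i',\cdot,\cdot)}$ that is nonzero at $p(i,j)$ must, by the three cases above, satisfy $i'=i+1>i$. Its own pivot then has first index $i'+1>i+1$, so it lies strictly later in the lexicographic order of $E$.

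Now suppose $\sum\lambda_{(i,j)}C_{(i,i+1,i+1+j)}=0$. Let $(i,j)$ be the index with $\lambda_{(i,j)}\neq0$ and $i$ maximal, and read off the coordinate $p(i,j)$. Every vector contributing at that coordinate other than $C_{(i,i+1,i+1+j)}$ has first index $i+1$, and all such coefficients vanish by the maximality of $i$. This gives $\lambda_{(i,j)}=0$, a contradiction, so the family is independent.

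The expected main obstacle is making this pivot and maximality bookkeeping airtight. I would double-check it against the small case $N=4$ explicitly.

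Finally, independence of $\binom{N-1}{2}$ vectors in a kernel of that dimension yields a basis.
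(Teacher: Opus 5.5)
Your proof is correct. The paper gives no argument for this theorem; it only refers to \cite{Marco-Fer-Julio}. So your proposal is a self-contained substitute rather than a variant of an in-paper proof.

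It uses only machinery the paper already sets up:
\begin{itemize}
\item the decoding through $\theta_N$, where your identification of the three nonzero positions as $\theta(i,i+1)$, $\theta(i,i+1+j)$, $\theta(i+1,i+1+j)$ is right because $i(N-\frac{i+1}{2})=S_N(i+1)$;
\item the column structure $\Gamma e_{\theta(a,b)}=-f_a+f_b$, which gives kernel membership;
\item Theorem~\ref{dim} together with the standing hypothesis $q_{ij}>0$ (complete graph), which gives $\dim Ker(\Gamma)=\binom{N-1}{2}$.
\end{itemize}

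The independence step is sound. The coordinate $\theta(i+1,i+1+j)$ is nonzero only in $C_{(i,i+1,i+1+j)}$ and in vectors of the form $C_{(i+1,\cdot,\cdot)}$. When $j=1$ these are all of them; when $j\ge 2$ it is only $C_{(i+1,i+2,i+1+j)}$. Choosing $i$ maximal among the nonzero coefficients therefore yields the contradiction.

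Compared with the bare citation, your route makes the theorem verifiable inside the paper. It also explains the formula for $\theta'_N$. The pivots $(i+1,i+1+j)$ run exactly over the edges not incident to vertex $1$, i.e.\ the complement of the spanning star $\{(1,k)\}$. That is why $\theta_N(i+1,i+1+j)-(N-1)$ is a bijection onto $\{1,\dots,\binom{N-1}{2}\}$. Restricted to those coordinates, your family is block triangular with identity diagonal blocks.

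This structure even lets you prove spanning without Theorem~\ref{dim}. Take a kernel vector and subtract the combination of the $C$'s that matches it on the non-star coordinates. What remains is a kernel vector supported on the star, and row $k\ge 2$ of $\Gamma$ then forces its $(1,k)$ entry to vanish.

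For the write-up, delete the initial false claim that $C_{(i,i+1,i+1+j)}$ is the only element nonzero at its pivot, and state the three-case incidence fact directly. The remark that the other vector's pivot lies later in the order is unnecessary once you argue by maximality of $i$.
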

 
\begin{proof}
\noindent See \cite{Marco-Fer-Julio}.
\end{proof}

\noindent For each triplet in $E'$, from equation (\ref{Orden-triple}) we identify the cycles 
$$C_{(i, i+1,i+1+j)} \rightarrow C_{\ell}\quad \textrm{if}\ \theta'_N(i,i+1,i+1+j)=\ell$$
\noindent  In such a way, if $J\in ker(\Gamma)$ 
\begin{eqnarray}\label{J_lin_com}
J=d_1 C_{1}+d_2 C_{2}+\dots +d_{N-1\choose 2} C_{N-1\choose 2}
\end{eqnarray}
where $d_{\ell}$ is	 the value of the currents in the cycle $C_{\ell}$. If the chain is in equilibrium, then $d_{\ell}=0$ for all $1\leq \ell \leq {N-1\choose 2}$.

\section{Kolmogorov's reversibility criterion}

\noindent Let us now recall the Kolmogorov's reversibility criterion for Markov chains, which provides necessary and sufficient conditions for a chain to be in equilibrium. 

\begin{proposition}\label{Kolmogorov-Criterio}
A necessary and sufficient condition for a continuous time Markov chain with infinitesimal generator $Q$ to be in equilibrium is, 
\begin{enumerate}[a)]
\item If $q_{ij}>0$, then $q_{ji}>0$.
\item For all cycle $x_{0},x_{1},\dots,x_{n}=x_{0}$ such that $\prod_{i=1}^{n} q_{i,i-1}>0$, then 
\begin{equation}\label{Kolmo-cond}
\prod_{i=1}^{n} \frac{q_{i-1,i}}{q_{i,i-1}}=1.
\end{equation}
\end{enumerate}
\end{proposition}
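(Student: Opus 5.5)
We prove Kolmogorov's criterion by reducing equilibrium to detailed balance, which the paper already recalls is equivalent, and then build $\pi$ explicitly from path products. We read the cycle condition (\ref{Kolmo-cond}) with the usual indexing: along $x_0,x_1,\dots,x_n=x_0$ we have $\prod_{i=1}^n q_{x_{i-1}x_i}=\prod_{i=1}^n q_{x_i x_{i-1}}$ whenever these products are positive. We also assume the chain is irreducible, so that the invariant distribution is unique and positive; otherwise we argue on each communicating class separately.

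\textbf{Necessity.} Suppose the chain is in equilibrium, so detailed balance $\pi_i q_{ij}=\pi_j q_{ji}$ holds with $\pi_i>0$ for all $i$. Condition a) is immediate: if $q_{ij}>0$, then $q_{ji}=\pi_i q_{ij}/\pi_j>0$. For b), take a cycle with positive rates and multiply the identities $\pi_{x_{i-1}}q_{x_{i-1}x_i}=\pi_{x_i}q_{x_ix_{i-1}}$ over $i=1,\dots,n$. Each $\pi_{x_k}$ appears exactly once on each side because $x_n=x_0$, so the $\pi$'s cancel. The result is $\prod q_{x_{i-1}x_i}=\prod q_{x_ix_{i-1}}$, which is (\ref{Kolmo-cond}).

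\textbf{Sufficiency.} Fix a reference state $x_0$. For any state $j$, irreducibility together with a) gives a path $x_0=y_0,y_1,\dots,y_m=j$ with positive rates in both directions. Define
$$\tilde\pi_j=\prod_{k=1}^m \frac{q_{y_{k-1}y_k}}{q_{y_ky_{k-1}}}.$$

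The key step is showing that $\tilde\pi_j$ does not depend on the chosen path. Given two such paths to $j$, follow the first one and then the second one reversed. This gives a closed walk from $x_0$ to $x_0$ with positive rates. By b), the ratio product around this walk equals $1$, and that is exactly the statement that the two path products agree. One subtlety: the closed walk may repeat vertices, and (\ref{Kolmo-cond}) is stated for cycles $x_0,\dots,x_n=x_0$. Since those sequences are allowed to repeat states, this is fine. If one insists on simple cycles, a closed walk decomposes into simple cycles and backtracks, and backtracks contribute trivial ratio factors.

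With well-definedness in hand, detailed balance follows. For $q_{ij}>0$, extend a path to $i$ by the edge $(i,j)$. This gives $\tilde\pi_j=\tilde\pi_i\,q_{ij}/q_{ji}$, that is, $\tilde\pi_iq_{ij}=\tilde\pi_jq_{ji}$. If $q_{ij}=0$, then $q_{ji}=0$ by a), and both sides vanish.

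Normalise to $\pi=\tilde\pi/\sum_j\tilde\pi_j$, which is legitimate since $S$ is finite. Summing the detailed balance identities over $i$, as in Lemma 1 and (\ref{Suma-DB}), shows every current vanishes, so $\pi Q=0$. Hence $\pi$ is invariant and satisfies detailed balance, and the chain started from $\pi$ is in equilibrium.

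The main obstacle is the path-independence step, in particular handling non-simple closed walks carefully. Everything else is direct algebra.
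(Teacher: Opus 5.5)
Your argument is correct and is the standard proof of Kolmogorov's criterion. Necessity comes from multiplying the detailed balance relations around the cycle so that the $\pi$'s cancel. Sufficiency comes from defining $\tilde\pi$ through ratio products along paths from a fixed state, with b) applied to closed walks (or to simple cycles plus backtracks) giving path independence. The paper does not prove the proposition itself; it cites Durrett, and this is the proof given there.

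One small correction: your irreducibility hypothesis is genuinely needed for necessity. Take two states with $q_{12}=1$, $q_{21}=0$: then $\pi=(0,1)$ satisfies detailed balance and the chain started from it is in equilibrium, yet a) fails. So your aside about arguing on each communicating class separately is only valid in the sufficiency direction, where a) forces every class to be closed.
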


\noindent See \cite{Durrett} for a proof.

\noindent Now, we present the main result of the paper.

\begin{theorem}
Given a continuous time Markov chain with finite state space $S$, such that $|S|=N$ and infinitesimal generator $Q$ such that
 $q_{ij}>0$ for all $i\neq j$; a necessary and sufficient condition for a chain to be in equilibrium is that for all triple
$(i,i+1,i+1+j)$ with $1\le i\le N-2$ and $1\le j\le N-(i+1)$, the determinants  
\begin{eqnarray}
\Delta_{(i,i+1,i+1+j)}:=&\left|\begin{array}{ccc}
q_{i,i+1}&-q_{i+1, i}&0\\
q_{i, i+1+j}&0&-q_{i+1+j, i}\\
0&q_{i+1,i+1+j}&-q_{i+1+j, i+1}
\end{array}\right|
\end{eqnarray}
are equal to zero.
\end{theorem}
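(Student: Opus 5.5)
First expand the determinant along its first row:
\[
\Delta_{(i,i+1,i+1+j)} = q_{i,i+1}\,q_{i+1,i+1+j}\,q_{i+1+j,i} \;-\; q_{i+1,i}\,q_{i+1+j,i+1}\,q_{i,i+1+j}.
\]
So $\Delta_{(i,i+1,i+1+j)}=0$ says exactly that the rate products around the triangle $i\to i+1\to i+1+j\to i$ and around the reverse triangle agree. Since all $q_{ij}>0$, this is Kolmogorov's condition (\ref{Kolmo-cond}) for that 3-cycle. Necessity is then immediate: if the chain is in equilibrium, Proposition \ref{Kolmogorov-Criterio} gives (\ref{Kolmo-cond}) for every cycle, in particular for these triangles.

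For sufficiency, take logarithms. Define $w\in\mathbb{R}^{|E|}$ by $w_{\theta(a,b)}:=\log(q_{ab}/q_{ba})$ for $a<b$, and extend it antisymmetrically to reversed edges. For a closed path $\gamma$, the logarithm of the left side of (\ref{Kolmo-cond}) equals $\langle w, v_\gamma\rangle$, where $v_\gamma\in\mathbb{R}^{|E|}$ is the signed edge-count vector of $\gamma$: each traversal of $(a,b)$ from $a$ to $b$ adds $+1$ and each traversal in the other direction adds $-1$. A closed path has as many entries as exits at each vertex, so $\Gamma v_\gamma=0$, i.e.\ $v_\gamma\in\ker(\Gamma)$. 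Reading off (\ref{tray cerradas}), the vector $C_{(i,i+1,i+1+j)}$ has $+1$ at $\theta(i,i+1)$ and $\theta(i+1,i+1+j)$ and $-1$ at $\theta(i,i+1+j)$. This is the signed vector of the triangle $i\to i+1\to i+1+j\to i$. Hence the hypothesis $\Delta=0$ reads $\langle w, C_\ell\rangle=0$ for every $\ell$. By the basis theorem, $\mathcal{C}$ spans $\ker(\Gamma)$, so $v_\gamma=\sum_\ell c_\ell C_\ell$ and $\langle w,v_\gamma\rangle=\sum_\ell c_\ell\langle w,C_\ell\rangle=0$. Therefore (\ref{Kolmo-cond}) holds for every cycle. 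Condition a) of Proposition \ref{Kolmogorov-Criterio} is trivial because all rates are positive, so that proposition yields equilibrium.

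The main obstacle is carefully justifying the passage from cycles to $\ker(\Gamma)$. Two points need care. The sign convention of $\Gamma$ (edge leaves $a$ with $-1$, enters $b$ with $+1$) must match the orientation used in $v_\gamma$. And repeated vertices or edges in a general cycle must be handled, which linearity of $v_\gamma$ in the traversals does. I would also double-check that the entries in (\ref{tray cerradas}) really correspond, via $\theta_N$, to the edges $(i,i+1)$, $(i+1,i+1+j)$, $(i,i+1+j)$. Using $S_N(i)=(i-1)(N-\tfrac i2)$, the formula gives $\theta(i,i+1)=S_N(i)+1$, $\theta(i,i+1+j)=S_N(i)+j+1$, and $\theta(i+1,i+1+j)=S_N(i+1)+j=i(N-\tfrac{i+1}{2})+j$. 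These match the positions in (\ref{tray cerradas}), so the identification holds and the argument closes.
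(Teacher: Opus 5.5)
Your proof is correct and follows the paper's route. You expand $\Delta_{(i,i+1,i+1+j)}$ to match its vanishing with Kolmogorov's condition on the triangle $i\to i+1\to i+1+j\to i$, then use that $\mathcal{C}$ spans $\ker(\Gamma)$ to extend the condition to every cycle, and your logarithmic functional $\langle w,\cdot\rangle$, the check that every closed path's signed edge vector lies in $\ker(\Gamma)$, and the index verification against $\theta_N$ supply the justification the paper compresses into ``since $\mathcal{C}$ is a basis for the set of cycles'' (and only illustrates multiplicatively in its four-state example).
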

\begin{proof}
\noindent We shall prove that the assumptions of this theorem imply Kolmogorov's reversibility criterion. First of all, $q_{ij}>0$ for all $i\neq j$  implies $a)$ in  Proposition \ref{Kolmogorov-Criterio}.

\noindent On the other hand, condition  b) in Proposition \ref{Kolmogorov-Criterio} for a minimal length cycle $C_{(i, i+i, i+1+j)}$ is

\[\frac{q_{i,i+1}}{q_{i+1,i}}\frac{q_{i+1,i+1+j}}{q_{i+1+j,i+1}}\frac{q_{i+1+j,i}}{q_{i,i+1+j}}=1,\]

\noindent in an equivalent way  
\begin{eqnarray}
\nonumber 0&=&q_{i,i+1} q_{i+1,i+1+j} q_{i+1+j,i}- q_{i,i+1+j} q_{i+1+j,i+1}  q_{i+1,i}\\ 
&=&\left|\begin{array}{ccc}
q_{i,i+1}&-q_{i+1, i}&0\\
q_{i, i+1+j}&0&-q_{i+1+j, i}\\
0&q_{i+1,i+1+j}&-q_{i+1+j, i+1}
\end{array}\right|=\Delta_{(i,i+1,i+1+j)}
\end{eqnarray} 

\noindent Since $\mathcal{C}:=\left\{C_{(i,i+1,i+1+j)}:\, 1\leq i\leq N-2,\, 1\leq j\leq N-(i+1) \right \}$ is a basis for the set of cycles, the result it follows.
\end{proof}

\begin{example}
Consider a Markov chain with four states and generator  
\[
Q=\left(\begin{array}{cccc}
q_{11}&q_{12}&q_{13}&q_{14}\\
q_{21}&q_{22}&q_{23}&q_{24}\\
q_{31}&q_{32}&q_{33}&q_{34}\\
q_{41}&q_{42}&q_{43}&q_{44}
\end{array}\right),\qquad q_{ii}=-\sum_{j:j\neq i}q_{ij}.
\]

\noindent Its incidence matrix and interaction graph are
\[
\Gamma=\left( \begin{array}{rrrrrr}
-1&-1&-1&0 & 0 &0\\
1 & 0& 0&-1& -1&0\\
0 & 1& 0& 1& 0&-1\\
0 & 0& 1&0& 1&1\\
\end{array}\right),
\]

\begin{figure}[htb]
\[\xymatrix{ *+[o][F]{1} \ar[rr] \ar[rdrd] \ar[dd] & &*+[o][F]{2} \ar[ldld] \ar[dd] \\ & & \\
*+[o][F]{4} & & *+[o][F]{3} \ar[ll] }  \]
\caption{Interaction graph for a four-state chain.}
\end{figure}

\noindent From $(\ref{tray cerradas})$ it follows that a basis for the kernel of $\Gamma$ is $ \mathcal{C}=\{C_{1}, C_{2}, C_{3}\}=\{C_{(1,2,3)}, C_{(1,2,4)}, C_{(2,3,4)} \}$, where
\[
C_{(1,2,3)}=\left(\begin{array}{r}
1\\-1\\0\\1\\0\\0
\end{array}  \right),\qquad 
C_{(1,2,4)}=\left(\begin{array}{r}
1\\0\\-1\\0\\1\\0
\end{array}  \right),\qquad 
C_{(2,3,4)}=\left(\begin{array}{r}
0\\0\\0\\1\\-1\\1
\end{array}  \right)
\]
The Kolmogorov's reversibility criterion for the cycles in the basis $\mathcal{C}$ is 
\[
\frac{q_{12} q_{23} q_{31}}{q_{21} q_{32} q_{13}}=1,\qquad \frac{q_{12} q_{24} q_{41}}{q_{21} q_{42} q_{14}}=1,\qquad \frac{q_{23} q_{34} q_{42}}{q_{32} q_{43} q_{24}}=1. 
\]
Notice that if a minimal cycle is traveled in opposite way, let say $C_{(i,i+1+j, i+1)}$, we have that 
\[
1=\frac{q_{i, i+1+j} q_{i+1+j,i+1} q_{i+1, i}}{q_{i+1+j,i} q_{i+1,i+1+j} q_{i,i+1}}=\left(\frac{ q_{i,i+1} q_{i+1,i+1+j} q_{i+1+j,i}}{q_{i+1, i} q_{i+1+j,i+1} q_{i, i+1+j}}\right)^{-1},
\] 
which is the inverse of the reversibility criterion for the cycle $C_{(i, i+1, i+1+j)}$.
Thus, for any other cycle, say  
\[
C_{(1,3,4)}=\left(\begin{array}{r}
0\\1\\-1\\0\\0\\1
\end{array}  \right)
\]
It is verified that  $C_{(1,3,4)}=-C_{(1,2,3)}+C_{(1,2,4)}+C_{(2,3,4)}$, the coordinates $(-1, 1,1)$ of $C_{(1,3,4)}$ in the ordered basis  $\mathcal{C}$, tell us that this is equivalent to traveling the cycle $C_{(1,2,3)}$ 
in reverse way followed by the cycles $C_{(1,2,4)}$ and $C_{(2,3,4)}$, so
\[
1=\Big(\frac{q_{12} q_{23} q_{31}}{q_{21} q_{32} q_{13}}\Big)^{-1}\frac{q_{12} q_{24} q_{41}}{q_{21} q_{42} q_{14}}\frac{q_{23} q_{34} q_{42}}{q_{32} q_{43} q_{24}}=
\frac{  q_{13} q_{34} q_{41}  }{q_{31} q_{43} q_{14} }
\]
i.e., Kolmogorov's reversibility criterion is satisfied for the cycle $C_{(1,3,4)}$. The same holds for the cycle $C_{(1,2,3,4)}=(1,0,-1,1,0,1)^{t}=C_{(1,2,4)}+C_{(2,3,4)}$ which has coordinates $(0,1,1)$ 
\[
1=\frac{q_{12} q_{24} q_{41} q_{23} q_{34} q_{42}}{q_{21} q_{42} q_{14} q_{32} q_{43} q_{24}}=\frac{q_{12} q_{23} q_{34}  q_{41}  }{q_{21}  q_{32} q_{43}  q_{14}}.
\]
\end{example}

\end{document}